\newtheorem{thm}{Theorem}
\newtheorem{prop}{Proposition}
\newtheorem{cor}{Corollary}
\theoremstyle{definition}
\newtheorem{remark}{Remark}
\def\Gr{{\rm Gr}}
\def\af{{\rm af}}
\def\Z{{\mathbb Z}}
\def\C{{\mathbb C}}
\def\Q{{\mathbb Q}}
\def\Sym{{\rm Sym}}
\def\CC{\mathcal{C}}
\def\TT{\mathcal{T}}
\begin{document}
\title[Affine Schubert classes and Schur positivity]{Affine Schubert classes, Schur positivity, and combinatorial
Hopf algebras}
\author{Thomas Lam}
 \thanks{T.L. was partially supported by NSF grants DMS-0600677 and DMS-0652641.}
\address{Harvard University}%
\email{tfylam@math.harvard.edu}%
\maketitle

\begin{abstract}
We suggest the point of view that the Schubert classes of the affine Grassmannian of a simple algebraic group $G$ should be considered as Schur-positive symmetric functions.  In particular, we give a geometric explanation of the Schur positivity of $k$-Schur functions (at $t = 1$).  We also put this in the context of the theory of combinatorial Hopf algebras.
\end{abstract}

\section{Affine Schubert classes ``are'' Schur-positive symmetric functions}
Let $G$ be a simple and simply-connected complex algebraic group,
and let $\Gr_G$ denote the (ind-scheme) affine Grassmannian of $G$.
Let $W$ denote the Weyl group of $G$ and $W_\af$ denote the affine
Weyl group of $G$.  The homology $H_*(\Gr_G) = H_*(\Gr_G,\Z)$ has a
basis given by the Schubert classes $\{\xi_w \mid w \in W_\af/W\}$
\cite{Kum}.  All (co)homologies have $\Z$-coefficients.

If $\iota: G \to G'$ is an inclusion of algebraic groups then there
is a closed embedding $\iota_{\Gr}: \Gr_G \to \Gr_{G'}$, see for
example \cite[A.5]{Gai}. The results of Kumar and Nori
\cite[Proposition (5)]{KN} imply that the homology class $[X] \in
H_*(\Gr_{G'})$ of any finite-dimensional subvariety $X \subset
\Gr_{G'}$ is a (finite) nonnegative sum $[X] = \sum_w a_w \xi_w$ of
the Schubert classes $\xi_w \in H_*(\Gr_{G'})$.  Applying this to
the image $\iota_{\Gr}(X_v)\subset \Gr_{G'}$ of a Schubert variety
$X_v \subset \Gr_G$, we obtain

\begin{thm}\label{thm:pos}
For any $v \in W_\af/W$, the pushforward $(\iota_{\Gr})_*(\xi_v) \in
H_*(\Gr_{G'})$ of a Schubert class of $\Gr_G$ is a nonnegative
linear combination of Schubert classes $\{\xi_w \mid w \in
W'_\af/W'\}$ of $\Gr_{G'}$.
\end{thm}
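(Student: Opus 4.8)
The plan is to reduce the assertion directly to the Kumar--Nori positivity statement recalled above. I would begin by recalling that a Schubert class $\xi_v \in H_*(\Gr_G)$ is by definition the fundamental homology class $[X_v]$ of the corresponding Schubert variety $X_v \subset \Gr_G$, and that each $X_v$ is a finite-dimensional projective subvariety of the ind-scheme $\Gr_G$. Computing $(\iota_{\Gr})_*(\xi_v)$ thus amounts to computing the proper pushforward of a single fundamental class, and the task becomes expanding the resulting class in the Schubert basis of $H_*(\Gr_{G'})$.

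The key step is to identify this pushforward with the fundamental class of the image variety. Since $\iota_{\Gr}: \Gr_G \to \Gr_{G'}$ is a closed embedding, its restriction to the closed subvariety $X_v$ is again a closed embedding, hence an isomorphism onto the image variety $\iota_{\Gr}(X_v)$; as the pushforward of a fundamental class along such an isomorphism onto its image introduces no multiplicity, I would conclude that $(\iota_{\Gr})_*(\xi_v) = (\iota_{\Gr})_*[X_v] = [\iota_{\Gr}(X_v)]$. All of this takes place inside a single finite-dimensional piece of $\Gr_{G'}$, so no ind-scheme or convergence subtleties intervene.

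It then remains only to expand $[\iota_{\Gr}(X_v)]$. Writing $X = \iota_{\Gr}(X_v)$, a finite-dimensional subvariety of $\Gr_{G'}$, the Kumar--Nori result \cite[Proposition (5)]{KN} applies verbatim and furnishes a finite expansion $[X] = \sum_w a_w \xi_w$ with every $a_w \geq 0$, which is exactly the claimed nonnegativity. I expect the only point requiring care to be the pushforward identity of the second step, and in particular the absence of any degree or multiplicity factor; but this is immediate from the fact that a closed embedding is an isomorphism onto its image. The substantive positivity content is supplied entirely by the cited Kumar--Nori theorem, so beyond this bookkeeping there is no genuine obstacle.
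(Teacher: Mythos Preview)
Your argument is correct and matches the paper's own reasoning exactly: the paper also derives the theorem by applying the Kumar--Nori positivity result to the image variety $\iota_{\Gr}(X_v) \subset \Gr_{G'}$, using that $\iota_{\Gr}$ is a closed embedding so that $(\iota_{\Gr})_*(\xi_v) = [\iota_{\Gr}(X_v)]$. You have simply made explicit the one-line bookkeeping (pushforward of a fundamental class along a closed embedding) that the paper leaves implicit.
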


This simple observation was obtained as a consequence of discussions
with Mark Shimozono, and is the basis for the current article.  In
the ``limit'' $G = SL(\infty, \C)$ the homology
$H_*(\Gr_{SL(\infty,\C)})$ can be identified with the ring $\Sym$ of
symmetric functions, and the Schubert basis is given by the Schur
functions.  Indeed, $\Gr_{SL(\infty,\C)}$ is homotopy equivalent to
$\Omega SU(\infty)$ (\cite{PS}) and by Bott-periodicity
(\cite{Bott}) also to the classifying space $BU(\infty)$. It is well
known that $H_*(BU(\infty)) \simeq \Sym$.  Picking an embedding $G
\hookrightarrow SL(m,\C) \hookrightarrow SL(\infty,\C)$ one obtains
a map $\eta: H_*(\Gr_G) \to \Sym$, and from Theorem \ref{thm:pos} we
see that ``every affine Schubert class is a Schur-positive symmetric
function''.  This statement is slightly misleading (hence the
quotation marks) because for groups $G$ with torsion, one cannot
always arrange for $\eta$ to be an inclusion (see Remark
\ref{rem:SO}).  Nevertheless, this setup is a potent explanation for
many kinds of Schur-positivity.

In the cases $G = SL(n,\C)$ the homology ring $H_*(\Gr_G)$ is
isomorphic to a subalgebra $\Lambda_{(n)}$ of symmetric functions,
and it was shown in \cite{Lam} that under this isomorphism the
Schubert basis is identified with the $k$-Schur functions
$s^{(k)}_\lambda(X)$ \cite{LLM,LM} (with $k = n-1$ and $t = 1$) of
Lapointe, Lascoux, and Morse. It was conjectured in \cite{LLM} that
the $k$-Schur functions expand positively in terms of $(k+1)$-Schur
functions (called {\it $k$-branching positivity}), and in particular
that they expand positively in terms of Schur functions
(corresponding to $k =\infty$).  These conjectures were motivated by
the Macdonald positivity conjecture.  In Section \ref{sec:based}, we
check that the map on homology induced by the natural inclusions
$\iota: SL(n,\C) \to SL(n+1,\C)$ is the natural inclusion in
symmetric functions, obtaining these conjectures as consequences of
Theorem \ref{thm:pos}. Recently, Assaf and Billey \cite{AB} have
given a combinatorial proof of the Schur positivity conjecture, in
the more general case when the parameter $t$ (not discussed here) is
present. Another combinatorial approach to the $k$-branching
positivity will be given in joint work \cite{LLMS} with Lapointe,
Morse, and Shimozono.

In \cite{LSS}, the homology $H_*(\Gr_{Sp(2n,\C)})$ was also
identified with a ring of symmetric functions, and the Schubert
basis constructed.  The inclusions $Sp(2n,\C) \hookrightarrow
Sp(2n+2,\C)$ and $Sp(2n,\C) \hookrightarrow SL(2n,\C)$ give rise to
other branching positivity and Schur-positivity statements.

Our point of view so far puts symmetric functions as the ``universal
target'' for the homologies of affine Grassmannians.  It is natural
to ask how much flexibility there is with the maps $\eta: H_*(\Gr_G)
\to \Sym$ from our homology rings to symmetric functions. In Section
3, we connect this question with the category of combinatorial Hopf
algebras studied by Aguiar, Bergeron, and Sottile \cite{ABS}.  It is
shown in \cite{ABS} that the Hopf algebra of symmetric functions is
the terminal object in the category of cocommutative graded Hopf
algebras equipped with a character.  All the homologies
$H_*(\Gr_{G})$ are cocommutative graded Hopf algebras, so our aim to
write affine Schubert classes as symmetric functions is very natural
from this perspective.  In Section \ref{sec:top}, we give a
topological interpretation of the theorem of \cite{ABS}, replacing
Hopf algebras with $H$-spaces, and the character with a
$U(\infty)$-bundle.

\medskip

{\bf Acknowledgements.} This work arose from my collaboration with
Luc Lapointe, Jennifer Morse, and especially with Mark Shimozono.  I
also thank Nolan Wallach for an interesting discussion.

\section{Based loop spaces and symmetric functions}\label{sec:based}
Let $K \subset G$ denote the maximal compact subgroup.  Let $LG=
G(\C[z,z^{-1}])$ denote the space of algebraic maps $\C^* \to G$,
and let $L^+G = G(\C[z])$ denote the space of algebraic maps $\C \to
G$.  Let $\Omega K \subset LG$ be the space of polynomial based
loops into $K$: these are maps $f: \C^* \to G$ whose restriction to
$S^1 \subset \C^*$ lie in $K$, and such that $f(1) = 1$.  The affine
Grassmannian $\Gr_G$ can be presented as $LG/L^+G$, and it is shown
in \cite[Section 8]{PS} that $\Omega K$ can be identified with
$\Gr_G$ as topological spaces.  In particular, we have $H_*(\Omega
K) \simeq H_*(\Gr_G)$.  Note that our affine Grassmannian, which is
an ind-scheme, is denoted $\Gr_0^{\mathfrak g}$ in \cite{PS}.

Now let $\iota: G \to G'$ be an inclusion.  We will assume that the
maximal compact subgroups are chosen so that $\iota(K) \subset K'$.
To calculate the map $(\iota_{\Gr})_*: H_*(\Gr_G) \to
H_*(\Gr_{G'})$, we will instead consider the map $H_*(\Omega K) \to
H_*(\Omega K')$.  We may do so because the identification $\Omega K
\simeq LG/L^+G = \Gr_G$ is induced by the inclusion $\Omega K
\subset LG$, and we have the commutative diagram
\begin{align*}
\xymatrix{%
\Omega K \ar[d] \ar[r] & \Omega K' \ar[d] \\
LG \ar[r] & LG' }
\end{align*}
Since $\Omega K \to \Omega K'$ is a map of groups, the map
$(\iota_{\Gr})_*: H_*(\Gr_G) \to H_*(\Gr_{G'})$ is a Hopf-morphism.

The maximal compact subgroup of $SL(n,\C)$ is the special unitary
group $SU(n)$.  We shall always consider $Sp(2n,\C) \subset
SL(2n,\C)$ in the natural way, and the maximal compact subgroup is
denoted $Sp(n) = SU(2n) \cap Sp(2n,\C)$.  We shall consider three
inclusions: (a) $SL(n,\C) \hookrightarrow SL(n+1,\C)$ giving $SU(n)
\hookrightarrow SU(n+1)$, (b) $Sp(2n,\C) \hookrightarrow
Sp(2n+2,\C)$ giving $Sp(n) \hookrightarrow Sp(n+1)$, and (c)
$Sp(2n,\C) \hookrightarrow SL(2n,\C)$ giving $Sp(n) \hookrightarrow
SU(2n)$.  The inclusions (a) and (b) are the natural ones, induced
by inclusions of coordinate subspaces. We shall assume that these
embeddings are compatible, that is, the two compositions $Sp(2n,\C)
\hookrightarrow Sp(2n+2,\C) \hookrightarrow SL(2n+2,\C)$ and
$Sp(2n,\C) \hookrightarrow SL(2n,\C) \hookrightarrow SL(2n+2,\C)$
are identical.  This is easy to achieve (see \cite[p.183]{MT}).

We now switch to the notation $H_*(\Omega K)$, instead of
$H_*(\Gr_G)$.  The homology ring $H_*(\Omega SU(n))$ (resp.
$H_*(\Omega Sp(n))$ is a polynomial algebra with generators in
dimensions $2,4,\ldots,2n-2$ (resp. $2,6,\ldots,4n-2$).

\begin{prop}\label{prop:inc}
The induced maps on homology
\begin{align*}
H_*(\Omega SU(n)) & \to H_*(\Omega SU(n+1)) \\
H_*(\Omega Sp(n)) & \to H_*(\Omega Sp(n+1)) \\
H_*(\Omega Sp(n)) &\to H_*(\Omega SU(2n))
\end{align*}
 are Hopf-inclusions. The
first two maps are $\Z$-module isomorphisms below degrees $2n-1$ and
$4n-1$ respectively.
\end{prop}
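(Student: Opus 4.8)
The maps are already known to be Hopf-morphisms, so the content of the proposition is their injectivity together with the two low-degree isomorphisms; since an injective Hopf-morphism is automatically a Hopf-inclusion, injectivity is all that remains on that front. The plan is to transport each question to the cohomology of the compact groups, where everything is transparent, using the naturality of the cohomology suspension (transgression). Recall that $H^*(SU(m);\Q)=\Lambda(x_3,x_5,\ldots,x_{2m-1})$ and $H^*(Sp(m);\Q)=\Lambda(x_3,x_7,\ldots,x_{4m-1})$ are exterior algebras on primitive generators in odd degrees, the symplectic ones lying in the degrees $\equiv 3 \pmod 4$. In the path-loop fibration $\Omega K\to PK\to K$ a primitive class in $H^{d+1}(K;\Q)$ transgresses to a polynomial generator of $H_*(\Omega K;\Q)$ in degree $d$; matching these degrees against the $\Z$-algebra structure recalled above accounts for all generators. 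Because suspension is natural in $K$, the map induced on rational indecomposables of $H_*(\Omega\,\cdot\,)$ by an inclusion $K\hookrightarrow K'$ is exactly the transpose of the restriction $H^*(K';\Q)\to H^*(K;\Q)$ on primitives. So I would first reduce all three injectivity statements to surjectivity of the corresponding restriction on primitives: a surjection transposes to an injection on indecomposables.

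For the passage from indecomposables back to the whole algebra I would use the following elementary fact: a graded Hopf-morphism out of a $\Z$-polynomial Hopf algebra that is injective on rational indecomposables is itself injective. Since the source is torsion-free it embeds in its rationalization, so it suffices to argue over $\Q$; and over $\Q$ any homogeneous lift of a basis of the indecomposables of a graded-connected polynomial algebra generates it, necessarily freely by comparison of Poincar\'e series. Hence images that are linearly independent modulo decomposables are automatically algebraically independent, and the map is injective. Thus the entire proposition reduces to three concrete restriction computations on primitive cohomology.

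For (a) the restriction $H^*(SU(n+1);\Q)\to H^*(SU(n);\Q)$ fixes $x_3,\ldots,x_{2n-1}$ and kills the single extra generator $x_{2n+1}$; it is therefore an isomorphism through degree $2n$ and surjective on primitives. The first fact yields the loop-homology isomorphism in the asserted range below degree $2n-1$, and the second yields injectivity. Case (b) is identical with generators in degrees $\equiv 3\pmod 4$: only the top generator $x_{4n+3}$ is killed, giving the isomorphism below degree $4n-1$ and injectivity. Case (c) is the only one with genuine content. Here one invokes the quaternionic vanishing of odd Chern classes: under $Sp(n)\hookrightarrow SU(2n)$ the restriction $H^*(SU(2n);\Q)\to H^*(Sp(n);\Q)$ annihilates the generators in degrees $\equiv 1\pmod 4$ and carries those in degrees $\equiv 3\pmod 4$ onto the generators of $H^*(Sp(n);\Q)$. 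It is thus surjective on primitives, so the loop map is injective on indecomposables and hence injective.

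The two places where real work is needed are the naturality dictionary and case (c). Setting up the suspension comparison carefully — so that ``injective on loop-homology indecomposables'' really is the transpose of ``surjective on cohomology primitives'', and so that the rational computation controls the integral map (which is exactly where torsion-freeness of the polynomial homology rings is used) — is the main structural obstacle. Within (c), the one piece of honest geometry is the $Sp(n)\subset SU(2n)$ restriction itself, i.e.\ the rational vanishing of the odd Chern classes of a quaternionic bundle and the identification of the even ones with the symplectic Pontryagin classes. Cases (a) and (b) are formal once the dictionary is in place; indeed their isomorphism parts alternatively follow at once from the sphere-bundle fibrations $SU(n)\hookrightarrow SU(n+1)$ and $Sp(n)\hookrightarrow Sp(n+1)$, with quotients $S^{2n+1}$ and $S^{4n+3}$, by looping. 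By contrast $Sp(n)\subset SU(2n)$ is only low-connected, so (c) is inaccessible by connectivity and must be done through cohomology.
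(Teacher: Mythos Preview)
Your argument is correct and takes a genuinely different route from the paper's. The paper treats the first two maps by looping the sphere fibrations $SU(n)\to SU(n+1)\to S^{2n+1}$ and $Sp(n)\to Sp(n+1)\to S^{4n+3}$: since fiber and base of the looped sequence $\Omega SU(n)\to \Omega SU(n+1)\to \Omega S^{2n+1}$ have homology concentrated in even degrees, the Serre spectral sequence degenerates and yields injectivity and the low-degree isomorphism directly over $\Z$ (this is Bott's argument). For the third map the paper reduces to the stable inclusion $Sp(\infty)\hookrightarrow SU(\infty)$ and quotes the explicit surjection $H^*(BU(\infty))\to H^*(Sp(\infty)/U(\infty))$ from Mimura--Toda, dualizing to obtain injectivity in homology. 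Your approach is more uniform: all three cases are reduced, via naturality of the loop suspension, to surjectivity of the restriction $H^*(K';\Q)\to H^*(K;\Q)$ on primitives, and the resulting rational injectivity is then promoted to $\Z$ using torsion-freeness of the polynomial source. This buys a single framework for all three statements and, in case (c), isolates the genuine geometric input---the rational vanishing of odd Chern classes on a quaternionic bundle---rather than deferring to a reference; the paper's approach buys an entirely integral argument for (a) and (b) with no rational detour. You already flag the looped-sphere alternative for (a) and (b), so you have in effect recovered the paper's method there as well.
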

\begin{proof}
That $H_*(\Omega SU(n)) \to H_*(\Omega SU(n+1))$ is injective is
shown in \cite[Proposition 8.4]{Bott}, by noting that the fiber and base of the sequence
$\Omega SU(n) \to \Omega SU(n+1) \to \Omega S^{2n-1}$ has homology concentrated in even dimensions.  The same argument, with
$\Omega S^{4n-1}$ replacing $\Omega S^{2n-1}$ shows that $H_*(\Omega
Sp(n)) \to H_*(\Omega Sp(n+1))$ is injective.  The claims concerning
isomorphisms in low dimensions also follow from this calculation.

Since all the inclusions are compatible, to show that $H_*(\Omega
Sp(n)) \to H_*(\Omega SU(2n))$ is injective it suffices to show that
$H_*(\Omega Sp(\infty)) \to H_*(\Omega SU(\infty))$ is injective. The
corresponding cohomology map $H^*(\Omega SU(\infty)) \simeq
H^*(BU(\infty)) \to H^*(Sp(\infty)/U(\infty)) \simeq H^*(\Omega
Sp(\infty))$ is calculated in \cite[3.18]{MT}, and is manifestly
surjective (see also \cite[Lemma 5.2 and Theorem 5.14]{MT}).  Thus
the dual map in homology is injective.
\end{proof}

\begin{remark}\label{rem:SO}
For groups $G$ with torsion, it may not be possible to find a
Hopf-inclusion $H_*(\Omega K) \to H_*(\Omega SU(m))$.  Let $K =
SO(n)$ be the special orthogonal group.  $SO(n)$ is not
simply-connected, but we have $\Omega_0 SO(n) \simeq \Omega Spin(n)$
where $\Omega_0 SO(n)$ denotes the connected component of the
identity.  According to \cite[Proposition 10.1]{Bott}, the Hopf
algebra $H^*(\Omega_0 SO(4n))$ has a primitive subspace of dimension
2 in degree $4n-2$, so admits no Hopf-inclusion into $H_*(\Omega
SU(m))$, which has primitive subspaces of at most dimension 1 in
each degree.
\end{remark}

We now calculate the inclusions of Proposition \ref{prop:inc} in
terms of symmetric functions.  Presumably the following calculations
are well-known to topologists; we present them in a form emphasizing
the connections with symmetric functions.

For symmetric function definitions we use here, we refer the reader
to \cite{Lam,LSS}.  Let $\Sym$ denote the ring of symmetric
functions in infinitely many variables $x_1,x_2,\ldots$, over $\Z$.
We let $h_i$ denote the homogeneous symmetric functions, $e_i$
denote the elementary symmetric functions, and $p_i$ denote the
power sum symmetric functions.  We let $s_\lambda$ denote a Schur
function. We let $\omega: \Sym \to \Sym$ denote the conjugation
involution of $\Sym$, sending $h_i$ to $e_i$.  The comultiplication
of $\Sym$ is given by $\Delta(h_i) = \sum_{j=0}^i h_j \otimes
h_{i-j}$, where $h_0 := 1$, or by $\Delta(p_i) = 1 \otimes p_i + p_i
\otimes 1$.

\subsection{$k$-branching in $SL(n,\C)$}\label{sec:SLk}
The Hopf-subalgebra $\Z[h_1,h_2,\ldots,h_{n-1}]$ is Hopf-isomorphic
to $H_*(\Omega SU(n))$ and under this isomorphism we showed in
\cite{Lam} that the Schubert basis $\{\xi_w \in H_*(\Omega SU(n))
\mid w \in W_\af/W\}$ is identified with the $k$-Schur functions
$s_\lambda^{(k)} \in \Sym$ of \cite{LM}, with $k = n-1$. For $k$
larger than the degree, a $k$-Schur function is simply a Schur
function.  There is some flexibility in this isomorphism: one may
compose with the involution $\omega$, which sends $k$-Schur
functions to $k$-Schur functions. At the level of the Weyl group,
this corresponds to the non-trivial Dynkin diagram automorphism of
$\tilde{A}_{n-1}$ which fixes the affine node $0$. Note that the
degree of a homogeneous symmetric function is half the topological
degree of the corresponding homology class.

Let $$\phi: \Z[h_1,h_2,\ldots,h_{n-1}] \simeq H_*(\Omega SU(n))
\hookrightarrow H_*(\Omega SU(n+1)) \simeq \Z[h_1,h_2,\ldots,h_n]$$
be the Hopf-inclusion induced by Proposition \ref{prop:inc}.  Since
$H_2(\Omega SU(n+1))$ has rank 1, we must have by Theorem
\ref{thm:pos} and Proposition \ref{prop:inc} $\phi(h_1) = h_1$.
 Since $\phi$ is a Hopf-morphism it must send primitive elements to
primitive elements.  The primitive elements are exactly the power
sum symmetric functions $p_1,p_2,\ldots$.  Since $\phi$ is an
isomorphism in low dimensions, it must send each power sum symmetric
function $p_i$ ($1 \leq i \leq n-1$) to $\pm p_i$.  We have two
choices $\phi(p_2) = \pm p_2$.  We may assume, by possibly composing
the identification $H_*(\Omega SU(n+1)) \simeq
\Z[h_1,h_2,\ldots,h_n]$ with $\omega$, that $\phi(p_2) = p_2$.  Now
suppose that we have established $\phi(p_i) = p_i$ for all $i < j$,
for some $j
> 2$. Expressing $e_j$ as a polynomial in power sum symmetric
functions, $p_j$ occurs with coefficient $(-1)^j p_j/j$. We see that
$e_j \pm 2p_j/j$ has monomials with fractional coefficients, so does
not lie in $\Z[h_1,h_2,\ldots,h_n]$, and conclude that $\phi(p_j) =
p_j$. It follows by induction that $\phi$ is the obvious inclusion.

From Theorem \ref{thm:pos}, we obtain
\begin{cor}
Every $k$-Schur function is $(k+1)$-Schur positive.  In particular,
$k$-Schur functions are Schur positive.
\end{cor}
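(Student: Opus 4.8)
The plan is to read the statement off directly from Theorem~\ref{thm:pos} once the map $\phi$ on symmetric functions has been pinned down. Recall that under the isomorphism $H_*(\Omega SU(n)) \simeq \Z[h_1,\ldots,h_{n-1}]$ the Schubert basis $\{\xi_v \mid v \in W_\af/W\}$ is identified with the $k$-Schur functions $s_\lambda^{(k)}$ for $k = n-1$, while the Schubert basis of $H_*(\Omega SU(n+1))$ is identified with the $(k+1)$-Schur functions. The inclusion $\iota: SL(n,\C) \to SL(n+1,\C)$ induces the pushforward $(\iota_{\Gr})_* = \phi$, which by the computation preceding the corollary is the obvious inclusion $\Z[h_1,\ldots,h_{n-1}] \hookrightarrow \Z[h_1,\ldots,h_n]$.

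First I would invoke Theorem~\ref{thm:pos}: for each $v$, the pushforward $\phi(\xi_v)$ is a nonnegative integer combination $\sum_w a_w \xi_w$ of Schubert classes of $\Gr_{SL(n+1,\C)}$. Translating through the two identifications, the left-hand side $\phi(\xi_v)$ is simply $s_\lambda^{(k)}$ viewed inside the larger ring, since $\phi$ is the standard inclusion and hence fixes $s_\lambda^{(k)}$; the right-hand side is a nonnegative combination of $(k+1)$-Schur functions. This gives $s_\lambda^{(k)} = \sum_w a_w\, s_{w}^{(k+1)}$ with $a_w \in \Z_{\geq 0}$, which is exactly $(k+1)$-Schur positivity.

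For the Schur positivity statement I would iterate. Since a $k$-Schur function of fixed degree $d$ coincides with an ordinary Schur function once $k$ exceeds $d$, applying the previous paragraph repeatedly expresses $s_\lambda^{(k)}$ first in $(k+1)$-Schur functions, then in $(k+2)$-Schur functions, and so on, until after finitely many steps every function appearing is a genuine Schur function. Because a composition of expansions with nonnegative coefficients again has nonnegative coefficients, the resulting Schur expansion of $s_\lambda^{(k)}$ is nonnegative.

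I expect the only real subtlety to lie not in the corollary itself but in the two inputs already secured: the geometric positivity of Theorem~\ref{thm:pos} (via the Kumar--Nori effectivity of homology classes of subvarieties) and the identification of $\phi$ with the naive inclusion of symmetric functions carried out just above. Granting these, the corollary is a formal unwinding of the dictionary between Schubert classes and $k$-Schur functions, with the stabilization of $k$-Schur functions at large $k$ ensuring that the limiting step is legitimate.
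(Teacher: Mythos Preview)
Your proposal is correct and follows exactly the paper's approach: the corollary is stated as an immediate consequence of Theorem~\ref{thm:pos} once $\phi$ has been identified with the natural inclusion, and your iteration argument for Schur positivity is the intended use of the remark that $k$-Schur functions stabilize to Schur functions for $k$ exceeding the degree.
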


\subsection{$k$-branching in $Sp(2n,\C)$}
Let $P_i\in \Sym$ denote the Schur $P$-functions labeled by a single
row. In \cite{LSS}, we showed that one has a Hopf-isomorphism
$H_*(\Omega Sp(n)) \simeq \Z[P_1,P_3,\ldots,P_{2n-1}]$, identifying
the homology Schubert basis $\{\xi_w \in H_*(\Omega Sp(n)) \mid w
\in W_\af/W\}$ with symmetric functions denoted $P^{(n)}_w$ (type
$C$ $k$-Schur functions).  The following result is implicit, but not
completely spelt out in \cite{LSS}, so we do so here:

\begin{prop}
Let $w \in W_\af$ be a minimal coset representative in $W_\af/W$.
 For $n > \ell(w)$, the symmetric function $P^{(n)}_w$ is a Schur
$P$-function.
\end{prop}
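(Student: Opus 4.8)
The plan is to prove the proposition by stabilization: I will show that for $n > \ell(w)$ the affine Schubert class $\xi_w$, and hence the symmetric function $P^{(n)}_w$, does not change under the maps of Proposition \ref{prop:inc}, and that its common value in the direct limit is a genuine Schur $P$-function. First I record the degree bookkeeping. The Schubert variety $X_w \subset \Gr_{Sp(2n,\C)}$ has complex dimension $\ell(w)$, so $\xi_w \in H_{2\ell(w)}(\Omega Sp(n))$ and $P^{(n)}_w$ is homogeneous of symmetric-function degree $\ell(w)$. By Proposition \ref{prop:inc} the stabilization $\iota_n\colon H_*(\Omega Sp(n)) \to H_*(\Omega Sp(n+1))$ is a $\Z$-module isomorphism in topological degrees below $4n-1$; since $\ell(w) < n$, our class lies comfortably in this range, as it does at every higher level $m \ge n$.

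The geometric heart of the argument is that, for $\ell(w) < n$, the embedding $\Gr_{Sp(2n,\C)} \hookrightarrow \Gr_{Sp(2n+2,\C)}$ carries the Schubert variety $X_w$ isomorphically onto the Schubert variety labelled by the same minimal coset representative, so that by Theorem \ref{thm:pos} the pushforward is the \emph{single} class $\iota_n(\xi_w) = \xi_w$ at level $n+1$. I would prove this by examining the affine Dynkin diagram $\tilde{C}_n$, whose simple reflections $s_0, s_1, \ldots, s_n$ differ from those of $\tilde{C}_{n+1}$ only by the far node $s_n$; a minimal coset representative of length $\ell(w) < n$ should have every reduced word supported on $s_0, \ldots, s_{n-1}$ (one cannot reach $s_n$ from the affine node in fewer than $n$ steps along the chain), so the Bruhat interval $[e,w]$ and the Bott--Samelson resolution of $X_w$ are literally identical in $\tilde{C}_n$ and $\tilde{C}_{n+1}$. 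This is exactly where the hypothesis $n > \ell(w)$ is used, and I expect it to be the main obstacle: the homological fact that $\iota_n$ is an isomorphism in low degree, together with the positivity of Theorem \ref{thm:pos}, is \emph{not} by itself enough to conclude that a single Schubert class maps to a single Schubert class (a nonnegative, unimodular transition matrix need not be a permutation matrix), so one genuinely needs the stabilization of the Weyl-group combinatorics.

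Next I would identify $\iota_n$ with the obvious inclusion of symmetric-function rings, exactly as in the $SL$ computation of Subsection \ref{sec:SLk}. Since $\iota_n$ is a Hopf-morphism it preserves primitives; the primitives of $\Z[P_1,P_3,\ldots]$ are spanned by the odd power sums $p_1,p_3,p_5,\ldots$, and because $\iota_n$ is an isomorphism in low degrees it must send each $p_{2i-1}$ to $\pm p_{2i-1}$. An integrality argument on the generators $P_{2i-1}$, of the same flavour as the one ruling out fractional coefficients in Subsection \ref{sec:SLk}, pins the signs and shows $\iota_n(P_{2i-1}) = P_{2i-1}$; hence $\iota_n$ is the inclusion $\Z[P_1,\ldots,P_{2n-1}] \hookrightarrow \Z[P_1,\ldots,P_{2n+1}]$ and $\iota_n(P^{(n)}_w) = P^{(n)}_w$ as symmetric functions. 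Combined with the previous paragraph this gives $P^{(n)}_w = P^{(n+1)}_w$, and by induction $P^{(n)}_w = P^{(m)}_w$ for all $m \ge n$.

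Finally I would pass to the direct limit. The inclusions assemble to $H_*(\Omega Sp(\infty)) \simeq \Gamma$, where $\Gamma = \Z[P_1,P_3,P_5,\ldots]$, and the Schubert basis of $H_*(\Omega Sp(\infty))$ is the set of Schur $P$-functions $\{P_\lambda : \lambda \text{ strict}\}$ (the classical identification for $\Omega Sp(\infty) \simeq Sp(\infty)/U(\infty)$, implicit in \cite{LSS}). Since the sequence $P^{(n)}_w = P^{(n+1)}_w = \cdots$ has stabilized and its image in $\Gamma$ is the limiting Schubert class, that common value is a Schur $P$-function; as the inclusions are the natural ones, $P^{(n)}_w$ equals this Schur $P$-function already at level $n$. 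This is the desired conclusion.
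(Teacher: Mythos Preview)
Your stabilization strategy is genuinely different from the paper's argument, and the first three steps are sound in outline: the reduction to reduced words avoiding $s_n$, the identification of $\iota_n$ with the natural inclusion of $\Z[P_1,\ldots,P_{2n-1}]$ into $\Z[P_1,\ldots,P_{2n+1}]$, and hence the equality $P^{(n)}_w = P^{(m)}_w$ for all $m \ge n > \ell(w)$, are all correct (and indeed the paper carries out the second of these computations in the paragraph following the Proposition).

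The gap is in your final paragraph.  You assert that ``the Schubert basis of $H_*(\Omega Sp(\infty))$ is the set of Schur $P$-functions'', invoking the Bott periodicity equivalence $\Omega Sp(\infty) \simeq Sp(\infty)/U(\infty)$.  But there are two bases in play here: the \emph{limits of the affine Schubert classes} $\lim_m P^{(m)}_w$, and the \emph{Lagrangian Schubert classes} of $Sp(\infty)/U(\infty)$ (which, by Pragacz, are indeed the Schur $P$-functions).  The homotopy equivalence does not by itself identify these two cell structures; showing that the limiting affine class $\lim_m P^{(m)}_w$ is a Schur $P$-function is precisely the content of the Proposition, so the argument as written is circular.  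Your citation ``implicit in \cite{LSS}'' does not close this, since \cite{LSS} defines $P^{(n)}_w$ but does not identify the stable limit geometrically.

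The paper sidesteps this circularity by working on the dual side and using explicit formulas: it compares the defining formula \cite[(1.2)]{LSS} for $Q^{(n)}_w$ (once $s_n$ is absent from reduced words) with Fomin--Kirillov's formula \cite[(4.1)]{FK} for $B_n$-Stanley symmetric functions, and then cites \cite[Theorem~8.2]{FK}, which shows these are Schur $P$-functions for Grassmannian elements.  Thus the identification with Schur $P$-functions in the stable range is established combinatorially, not by appeal to a limiting Schubert basis.  If you want to rescue your approach, you would need an independent argument that the Bott map $\Omega Sp(\infty)\to Sp(\infty)/U(\infty)$ carries (stabilized) affine Bruhat cells to Lagrangian Schubert cells---a plausible Morse-theoretic statement, but one that requires proof and is not supplied here.
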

\begin{proof}
We preserve all the notation of \cite{LSS}.  Using duality, it
suffices to show that the symmetric functions $Q^{(n)}_w$ of
\cite{LSS} coincide with the Schur $Q$-functions when $n$ is large.
In this case, a reduced expression of $w$ does not involve the
simple generator $s_n$.  (The simple generators of $W_\af$ are
$s_0,s_1,\ldots,s_n$, where $s_0$ is the affine node.)  Thus in the
formula \cite[(1.2)]{LSS} for $Q^{(n)}_w$, all $v \in W_\af$
involving $s_n$ may be ignored.  Comparing the definition of $Z$-s
in \cite{LSS} with \cite[(4.1)]{FK}, we see that our $Q^{(n)}_w$ are
a power of 2 times the $B_n$-Stanley symmetric functions of
\cite{FK}.  The latter are known to be Schur $P$-functions
\cite[Theorem 8.2]{FK} in the special case of a Grassmannian
$B_n$-element.  It follows that the $Q^{(n)}_w$ of \cite{LSS} are
Schur $Q$-functions.
\end{proof}

Note that one has $\Q[P_1,P_3,\ldots,P_{2n-1}] \simeq
\Q[p_1,p_3,\ldots,p_{2n-1}]$. We have the formula
\begin{equation}
\label{E:P} P_i = \frac{1}{2}\sum_{j=0}^i e_jh_{i-j}
\end{equation}
which gives a symmetric function with integral coefficients, despite
the half. When written as a polynomial in power sum symmetric
functions, only the terms $e_i$ and $h_i$ in \eqref{E:P} involve
$p_i$.  So it is not difficult to deduce that for an odd integer
$i$, the coefficient of $p_i$ in the expansion of $P_i$, when
expressed as a polynomial in (odd) power sum symmetric functions, is
equal to $1/i$.

The primitive subspace of $\Z[P_1,P_3,\ldots,P_{2n-1}]$ is spanned
by $p_1,p_3,\ldots,p_{2n-1}$.  It follows from Proposition
\ref{prop:inc} and the same argument as in Section \ref{sec:SLk}
(but without the complication of the conjugation) that the map
$$\psi: \Z[P_1,P_3,\ldots,P_{2n-1}] \simeq H_*(\Omega Sp(n))
\hookrightarrow H_*(\Omega Sp(n+1)) \simeq
\Z[P_1,P_3,\ldots,P_{2n+1}]$$ is the natural inclusion.  From
Theorem \ref{thm:pos}, we obtain
\begin{cor}
The symmetric function $P^{(n)}_w$ expands positively in terms of
$\{P^{(n+1)}_v\}$. In particular, $P^{(n)}_w$ is positive in terms
of Schur $P$-functions.
\end{cor}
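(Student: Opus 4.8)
The plan is to obtain both statements as immediate consequences of Theorem \ref{thm:pos}, exactly parallel to the $SL$ case treated in Section \ref{sec:SLk}. I would start from the inclusion $\iota: Sp(2n,\C) \hookrightarrow Sp(2n+2,\C)$ of case (b), whose induced pushforward on homology is the map $\psi: H_*(\Omega Sp(n)) \to H_*(\Omega Sp(n+1))$ identified above with the natural inclusion $\Z[P_1,\ldots,P_{2n-1}] \hookrightarrow \Z[P_1,\ldots,P_{2n+1}]$. By Theorem \ref{thm:pos}, the image $\psi(\xi_w)$ of each Schubert class is a nonnegative integer combination of the Schubert classes of $H_*(\Omega Sp(n+1))$. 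Transporting this through the Hopf-isomorphisms of \cite{LSS}, which send the Schubert bases to $\{P^{(n)}_w\}$ and $\{P^{(n+1)}_v\}$, and using that $\psi$ is the natural inclusion (so that $\psi(P^{(n)}_w) = P^{(n)}_w$ inside the larger ring), I would conclude $P^{(n)}_w = \sum_v c_{wv}\,P^{(n+1)}_v$ with every $c_{wv} \in \Z_{\ge 0}$. This gives the first assertion.

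For the ``in particular'' claim I would iterate the branching. Since $\psi$ is graded and the homological degree of $\xi_w$ equals $\ell(w)$, every index $v$ appearing satisfies $\ell(v) = \ell(w)$; hence the lengths remain bounded as I apply the branching repeatedly to pass from level $n$ up to any level $m > n$. The resulting composite coefficients, being sums of products of nonnegative integers, stay nonnegative. Taking $m > \ell(w)$ and invoking the preceding Proposition, each $P^{(m)}_v$ that occurs is an honest Schur $P$-function, so $P^{(n)}_w$ is a nonnegative integer combination of Schur $P$-functions.

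The argument uses no input beyond Theorem \ref{thm:pos}, Proposition \ref{prop:inc}, and the identification of $\psi$, so there is no serious obstacle; the only place demanding a little care is the iteration, where one must confirm that the intermediate expansions stay finite and degree-preserving, so that the stabilization $P^{(m)}_v = $ (Schur $P$-function) can legitimately be applied once $m$ exceeds $\ell(w)$.
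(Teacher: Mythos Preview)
Your proposal is correct and follows essentially the same approach as the paper: the paper simply writes ``From Theorem \ref{thm:pos}, we obtain'' the corollary, leaving implicit exactly the argument you spell out (pushforward positivity plus the identification of $\psi$ with the natural inclusion, then iteration together with the preceding Proposition for the Schur $P$-function stabilization). One small quibble: the topological degree of $\xi_w$ is $2\ell(w)$, not $\ell(w)$, though this does not affect your grading argument.
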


\subsection{$Sp(2n,\C)$ to $SL(2n,\C)$ branching}
Let $$\kappa: \Z[P_1,P_3,\ldots,P_{2n-1}]\simeq H_*(\Omega Sp(n))
\hookrightarrow H_*(\Omega SU(2n)) \simeq
\Z[h_1,h_2,\ldots,h_{2n-1}]$$ be the Hopf-inclusion induced by
Proposition \ref{prop:inc}.  Since all our inclusions commute, to
show that $\kappa$ is the natural inclusion of symmetric functions,
it suffices to show that $\kappa_\infty:\Z[P_1,P_3,\ldots]\simeq
H_*(\Omega Sp(\infty)) \hookrightarrow H_*(\Omega SU(\infty)) \simeq
\Sym$ is the natural inclusion. Let $\Gamma^* \subset \Sym$ denote
the ring of symmetric functions dual to $\Z[P_1,P_3,\ldots]$,
considered in \cite{LSS}.  We have $\Gamma^* = \Z[Q_1,Q_3,\ldots]$,
where $Q_i = 2P_i$.  The relations satisfied by $Q_i$ can be deduced
from \cite[(2.8)]{LSS}.

The dual map $\theta: H^*(\Omega SU(\infty)) \twoheadrightarrow
H^*(\Omega Sp(\infty))$ is given explicitly in \cite[3.18]{MT},
where $H^*(\Omega SU(\infty))$ is presented as $\Z[h_1,h_2,\ldots]$
and $H^*(\Omega Sp(\infty))$ is presented as $\Z[Q_1,Q_3,\ldots]$,
and the map is the surjection given by $\theta(h_i) = Q_i$ (the
Schur $Q$-function $Q_i$ is defined for even $i$ as well).  In terms
of power sum symmetric functions, this map is given by
$\theta(p_{2i}) = 0$ and $\theta(p_{2i+1}) = 2p_{2i+1}$.  Thus the
dual $\kappa^*_\infty$ of our desired map differs from the map
$\theta$ by Hopf-automorphisms: so we must have
$\kappa^*_\infty(p_j) = \pm\theta(p_j)$.  Taking duals and using
\cite[Lemma 2.1]{LSS}, which roughly says that $\theta: \Sym \to
\Gamma^*$ and the inclusion $\Z[P_1,P_3,\ldots,P_{2n-1}] \subset
\Sym$ are adjoint, we see that we have $\kappa_\infty(p_j) = \pm
p_j$. To see that the sign is positive, we may argue as in Section
\ref{sec:SLk}, and deduce that $\kappa_\infty$ itself is the natural
inclusion of rings of symmetric functions. From Theorem
\ref{thm:pos}, we obtain

\begin{cor}
The symmetric function $P^{(n)}_w$ expands positively in terms of
$(2n-1)$-Schur functions.
\end{cor}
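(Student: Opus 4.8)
The plan is to read this corollary directly off Theorem~\ref{thm:pos} applied to inclusion (c), $Sp(2n,\C) \hookrightarrow SL(2n,\C)$, using the identification of the induced map $\kappa$ that has just been established. First I would recall the two dictionaries between homology and symmetric functions. On the source, the isomorphism $H_*(\Omega Sp(n)) \simeq \Z[P_1,P_3,\ldots,P_{2n-1}]$ of \cite{LSS} carries the Schubert basis $\{\xi_w\}$ to the type $C$ $k$-Schur functions $P^{(n)}_w$. On the target, the isomorphism $H_*(\Omega SU(2n)) \simeq \Z[h_1,\ldots,h_{2n-1}]$ of \cite{Lam}, recalled in Section~\ref{sec:SLk}, carries its Schubert basis to the $(2n-1)$-Schur functions. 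Under these two identifications the pushforward $(\iota_{\Gr})_*\colon H_*(\Gr_{Sp(2n,\C)}) \to H_*(\Gr_{SL(2n,\C)})$ becomes exactly the map $\kappa$.

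The essential ingredient, which the preceding paragraph supplies, is that $\kappa$ is the natural inclusion of rings of symmetric functions; in particular it sends $P^{(n)}_w$ to the very same symmetric function $P^{(n)}_w$, now viewed inside $\Z[h_1,\ldots,h_{2n-1}]$. Granting this, the corollary falls out immediately: Theorem~\ref{thm:pos} asserts that $(\iota_{\Gr})_*(\xi_w)$ is a nonnegative integer combination of the target Schubert classes, and translating through the dictionaries above this says precisely that $P^{(n)}_w = \kappa(P^{(n)}_w)$ expands with nonnegative coefficients in the $(2n-1)$-Schur functions.

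Because the geometric positivity of Theorem~\ref{thm:pos} and the computation identifying $\kappa$ with the natural inclusion have both already been carried out, I expect no remaining obstacle here---the statement is a direct specialization rather than a fresh argument. The one point I would be careful to flag is that it is genuinely inclusion (c) being invoked, so that the target is $\Z[h_1,\ldots,h_{2n-1}]$ with $k = 2n-1$ and the relevant positive basis is that of the $(2n-1)$-Schur functions, in contrast to the Schur $P$-function positivity obtained from inclusions (a) and (b).
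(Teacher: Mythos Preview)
Your proposal is correct and matches the paper's own argument exactly: the corollary is obtained by applying Theorem~\ref{thm:pos} to inclusion (c) after having identified $\kappa$ with the natural inclusion of symmetric functions, so that the Schubert bases on source and target become $\{P^{(n)}_w\}$ and the $(2n-1)$-Schur functions respectively. One tiny slip in your final parenthetical: inclusion (a) concerns $SL(n)\hookrightarrow SL(n+1)$ and yields $k$-Schur positivity, not Schur $P$-positivity --- it is only inclusion (b) that gives the $P^{(n)}\to P^{(n+1)}$ (and hence Schur $P$) positivity.
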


With $n \to \infty$, we obtain as a special case the well-known fact
that Schur $P$-functions are Schur positive.

\section{Combinatorial versus topological Hopf
algebras}\label{sec:top} Let $H$ be a graded, connected, Hopf
algebra defined over $\Z$.  A character $\chi: H \to \Z$ is a
morphism of $\Z$-algebras.  Aguiar, Bergeron, and Sottile \cite{ABS}
define a {\it combinatorial Hopf algebra} to be a pair $(H,\chi)$.
(The Hopf algebras of \cite{ABS} are in fact over a field and we
have changed to the integers.)  The category $\CC$ of combinatorial
Hopf algebras has arrows $g: (H, \chi) \to (H',\chi')$ given by
Hopf-morphisms $g: H \to H'$ such that $\chi = \chi' \circ g$.
Symmetric functions $\Sym$ have a canonical character, given by
$\chi_\Sym(h_i) = 1$, or $f(x_1,x_2,x_3,\ldots) \mapsto
f(1,0,0,\ldots)$.

Aguiar, Bergeron, and Sottile show
\begin{thm}\label{thm:ABS}
The terminal object of the category of cocommutative Hopf algebras
is $(\Sym,\chi_\Sym)$.
\end{thm}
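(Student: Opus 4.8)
The plan is to exhibit, for every cocommutative combinatorial Hopf algebra $(H,\chi)$ in $\CC$, a morphism to $(\Sym,\chi_\Sym)$ and then to show it is the only one. First I would attach to $\chi$ a family of \emph{higher characters}: for a composition $\alpha=(a_1,\dots,a_k)$ of $n$ I set
\[
\chi_\alpha \;:=\; (\chi_{a_1}\otimes\cdots\otimes\chi_{a_k})\circ\Delta^{(k-1)}
\]
on the degree-$n$ component $H_n$, where $\Delta^{(k-1)}$ is the iterated coproduct followed by projection onto the multidegree $(a_1,\dots,a_k)$ and $\chi_{a_i}$ is $\chi$ restricted to $H_{a_i}$. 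Using these I would define, for homogeneous $h\in H_n$,
\[
\Psi(h) \;=\; \sum_{\alpha\models n}\chi_\alpha(h)\,M_\alpha \;\in\; \mathrm{QSym},
\]
where $M_\alpha$ is the monomial quasisymmetric function, extending $\Z$-linearly.

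The feature special to the cocommutative case is that coassociativity together with cocommutativity makes $\Delta^{(k-1)}$ invariant under permutation of its tensor factors, so $\chi_\alpha=\chi_{\alpha'}$ whenever $\alpha'$ is a rearrangement of $\alpha$. Since $m_\lambda=\sum_\alpha M_\alpha$ with $\alpha$ ranging over the distinct rearrangements of $\lambda$, the coefficient of $M_\alpha$ in $\Psi(h)$ depends only on the underlying partition, so $\Psi(h)$ lies in $\Sym\subset\mathrm{QSym}$; concretely $\Psi(h)=\sum_{\lambda\vdash n}\chi_\lambda(h)\,m_\lambda$. I would then check that $\Psi$ is a Hopf-morphism. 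Coalgebra compatibility follows directly from coassociativity of $\Delta$ matched against the deconcatenation coproduct $\Delta(M_\alpha)=\sum_{\alpha=\beta\gamma}M_\beta\otimes M_\gamma$ of $\mathrm{QSym}$. Algebra compatibility, $\Psi(xy)=\Psi(x)\Psi(y)$, is where the real work lies: it requires comparing the quasi-shuffle product of the $M_\alpha$ with the behaviour of the higher characters on a product in $H$, which in turn rests on the multiplicativity of $\chi$ and the bialgebra axiom $\Delta(xy)=\Delta(x)\Delta(y)$. This combinatorial bookkeeping is the main obstacle, although it is a finite and purely formal verification once the higher characters are in place.

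Finally I would dispatch the two remaining points. Character compatibility is immediate: $\chi_\Sym$ sends $f\mapsto f(1,0,0,\dots)$, so it annihilates every $m_\lambda$ with more than one part and sends $m_{(n)}$ to $1$, giving $\chi_\Sym(\Psi(h))=\chi_{(n)}(h)=\chi(h)$. For uniqueness I would observe that the higher characters are built solely from $\chi$ and the Hopf structure, so any morphism $g\colon(H,\chi)\to(\Sym,\chi_\Sym)$ of $\CC$, regarded as a map into $\mathrm{QSym}$, intertwines them with the higher characters of $\mathrm{QSym}$; since the latter satisfy $\chi_{\mathrm{QSym},\alpha}(M_\beta)=\delta_{\alpha\beta}$, the coefficient of $M_\alpha$ in $g(h)$ must equal $\chi_\alpha(h)$, forcing $g=\Psi$. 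This shows $(\Sym,\chi_\Sym)$ is terminal in the cocommutative part of $\CC$.
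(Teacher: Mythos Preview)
The paper does not supply its own proof of this theorem: it is quoted as a result of Aguiar, Bergeron, and Sottile \cite{ABS}, introduced by ``Aguiar, Bergeron, and Sottile show'' and followed by no proof environment. So there is nothing in the paper to compare your argument against.

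That said, your proposal is essentially the original argument of \cite{ABS}: build the universal map $\Psi$ into $\mathrm{QSym}$ via the higher characters $\chi_\alpha$, observe that cocommutativity forces $\chi_\alpha$ to depend only on the underlying partition so that $\Psi$ lands in $\Sym$, verify the Hopf-morphism and character conditions, and deduce uniqueness from the fact that the $\chi_\alpha$ are determined by $\chi$ and the coalgebra structure. Your outline is correct, and you are right that the only step with real content is the multiplicativity of $\Psi$, which is a quasi-shuffle bookkeeping exercise. One small caution: in \cite{ABS} the base ring is a field, whereas here you are working over $\Z$; nothing in your outline actually uses inverses, so the argument goes through, but it is worth flagging that this is a mild extension of the stated reference.
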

From this point of view, the central thesis of this article, which
is to express affine Schubert classes as symmetric functions, is
very natural.  The Hopf algebras considered in \cite{ABS} have a
combinatorial origin, while the Hopf algebras considered in the
present article have a topological origin.  It is thus natural to
find the topological version of Theorem \ref{thm:ABS}.

By a $H$-space we will mean a connected topological space $X$, which
is equipped with a homotopy-associative multiplication $m: X \times
X \to X$. A map of $H$-spaces is one that commutes with
multiplication up to homotopy. Let us consider the category $\TT$
whose objects are pairs $(X, E)$ where $X$ is a $H$-space and $E$ is
a $U(\infty)$-bundle over $X$ such that the following diagram is
Cartesian:
\begin{equation}\label{eq:pullback}
\xymatrix{
E \times E \ar@{->}[r] \ar@{->}[d] & E \ar@{->}[d]  \\
X \times X \ar@{->}[r] & X  }.
\end{equation}
The maps in $\TT$ are given by homotopy classes of $H$-space maps
which induce Cartesian diagrams.  Let us denote the total Chern
class of $U(\infty)$-bundle by $c(E)$.  The element $c(E)$ lies in
the completion of the cohomology $H^*(X)$, and gives rise to a
linear map $\chi_E: H_*(X) \to \Z$.  The condition that $\chi_E$ is
a character is that $c(E)$ is grouplike: $m^*(c(E)) = c(E) \otimes
c(E)$.  This condition follows from the diagram \eqref{eq:pullback}.

The classifying space $BU(\infty)$ has a natural $U(\infty)$-bundle
$EU(\infty)\to BU(\infty)$.  The $H$-space structure $m:BU(\infty)
\times BU(\infty) \to BU(\infty)$ classifies the Whitney sum of
vector bundles. Thus if $E \to X$ and $E' \to X$ are classified by
$f:X \to BU(\infty)$ and $f':X \to BU(\infty)$ then $E \oplus E'$ is
classified by $m \circ (f,f'): X \to BU(\infty) \times BU(\infty)
\to BU(\infty)$. It follows that $(BU(\infty),EU(\infty))$ satisfies
\eqref{eq:pullback}.  We shall pick an isomorphism $H_*(BU(\infty))
\simeq \Sym$ so that $c(EU(\infty)) = 1 + h_1 + h_2 + \cdots$.  (The
usual isomorphism would give the elementary symmetric functions, so
we compose with $\omega$.)  The following result follows nearly
immediately from the definitions.

\begin{thm}\label{thm:newABS}
The pair $(BU(\infty),EU(\infty))$ is the terminal object in the
category $\TT$.  There is a functor $\TT \to \CC$ given by $(X,E)
\mapsto (H_*(X),\chi_E)$, sending $(BU(\infty),EU(\infty))$ to
$(\Sym,\chi_\Sym)$.
\end{thm}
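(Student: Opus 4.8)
The plan is to verify the two assertions of Theorem~\ref{thm:newABS} separately: first that $(BU(\infty), EU(\infty))$ is terminal in $\TT$, and second that the assignment $(X,E)\mapsto (H_*(X),\chi_E)$ is a functor with the stated value on the terminal object. I would begin with the functor, since it is the more mechanical part. Given an object $(X,E)$, the space $X$ is an $H$-space, so $H_*(X)$ is a graded connected cocommutative Hopf algebra (the coproduct induced by the diagonal and the product induced by $m$). As observed in the paragraph preceding the statement, the Cartesian condition \eqref{eq:pullback} forces $c(E)$ to be grouplike, $m^*(c(E)) = c(E)\otimes c(E)$, which is exactly the statement that $\chi_E$ is an algebra character. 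On morphisms, an $H$-space map $g\colon X\to X'$ inducing a Cartesian diagram means precisely that the pulled-back bundle $g^*E'$ agrees with $E$, hence $g^*c(E') = c(E)$; dualizing gives $\chi_{E'}\circ g_* = \chi_E$, which is the compatibility $\chi = \chi'\circ g_*$ required of an arrow in $\CC$. So $g_* = H_*(g)$ is a valid morphism of combinatorial Hopf algebras, and functoriality (preservation of identities and composites) is immediate.

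Next I would pin down the value on the distinguished object. By the stipulated choice of isomorphism $H_*(BU(\infty)) \simeq \Sym$ we have $c(EU(\infty)) = 1 + h_1 + h_2 + \cdots$, so the induced character sends $h_i$ to the degree-$i$ coefficient, namely $\chi_{EU(\infty)}(h_i) = 1$. This is exactly $\chi_\Sym$, establishing that the functor carries $(BU(\infty), EU(\infty))$ to $(\Sym, \chi_\Sym)$.

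For terminality, I would argue as follows. Given any $(X,E)\in\TT$, the bundle $E$ is classified by a map $f\colon X\to BU(\infty)$, unique up to homotopy. I must check that $f$ is an $H$-space map inducing a Cartesian diagram, and that it is the only such morphism. That $f$ respects the $H$-space structures up to homotopy is the content of the remark that $m_{BU}$ classifies Whitney sum: the two bundles $(m_X)^*E$ and $E\boxplus E$ over $X\times X$ (pullback along the two projections, then Whitney sum) must be compared, and the Cartesian condition \eqref{eq:pullback} is precisely what identifies $(m_X)^*E$ with $E\boxplus E$, so $f\circ m_X \simeq m_{BU}\circ (f,f)$. Uniqueness up to the equivalence used in $\TT$ follows because a morphism $(X,E)\to(BU(\infty),EU(\infty))$ is required to induce a Cartesian diagram, i.e. to pull $EU(\infty)$ back to $E$; any such map is therefore a classifying map for $E$, and classifying maps into $BU(\infty)$ are unique up to homotopy.

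The step I expect to be the main obstacle is the careful identification in the terminality argument that the Cartesian square \eqref{eq:pullback} really does encode the Whitney-sum compatibility $(m_X)^*E \simeq E\boxplus E$, and hence that the classifying map $f$ is genuinely an $H$-space map rather than merely a map of spaces. This is where the bundle-theoretic hypothesis does all the work, and it is the point at which ``nearly immediately from the definitions'' must be made honest; everything else reduces to the universal property of $BU(\infty)$ as a classifying space and the bookkeeping of dualizing Chern classes to characters.
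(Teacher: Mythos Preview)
Your proposal is correct and follows essentially the same route as the paper's proof: take the classifying map $f\colon X\to BU(\infty)$ and use the Cartesian condition \eqref{eq:pullback} for both $(X,E)$ and $(BU(\infty),EU(\infty))$ to see that $f$ is an $H$-space map, while the functor statement is handled by the discussion preceding the theorem. You supply more detail than the paper (notably the uniqueness argument and the explicit verification that morphisms in $\TT$ induce morphisms in $\CC$), but the architecture is the same.
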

\begin{proof}
Let $(X,E) \in \TT$.  Let $f: X \to BU(\infty)$ be the (unique up to
homotopy) map classifying the bundle $E \to X$.  Then $f \times f :
X\times X \to BU(\infty) \times BU(\infty)$ classifies $E \times E$
and the diagram
\begin{equation}
\xymatrix{
X \times X \ar@{->}[r] \ar@{->}[d] & BU(\infty) \times BU(\infty) \ar@{->}[d]  \\
X \ar@{->}[r] & BU(\infty)  }
\end{equation}
is homotopy commutative by \eqref{eq:pullback} for $(X,E)$ and for
$(BU(\infty),EU(\infty))$. Thus the map $X \to BU(\infty)$ is a
$H$-space map and $f:(X,E) \to (BU(\infty),EU(\infty))$ a morphism
in $\TT$. The last statement has already been established.
\end{proof}

\begin{remark}\def\QSym{{\rm QSym}}
The terminal object in the category $\CC$ of all combinatorial Hopf
algebras is the Hopf algebra of quasisymmetric functions $\QSym$. It
is shown in \cite{BR} that $\QSym \simeq H^*(\Omega \Sigma {\mathbb
CP}^\infty)$, where $\Sigma$ denotes suspension.  Thus there should
be a different version of Theorem \ref{thm:newABS} with $\Omega
\Sigma {\mathbb CP}^\infty$ as the terminal object.
\end{remark}

\end{document}